\def\Q{{\bf Q}}\def\P{{\bf P}}


\documentclass[11pt]{amsart}
\usepackage{amsmath,amssymb}
\newcommand{\calB}{\mathcal{B}}
\newcommand{\N}{\mathbb{N}}
\newcommand{\Z}{\mathbb{Z}}

\newtheorem{thm}[section]{Theorem}
\newtheorem{cor}[section]{Corollary}

\newtheorem{defn}[section]{Definition}

\begin{document}
\title{Independence and Alpern Multitowers}
\author{James T. Campbell}
\email{jcampbll@memphis.edu}
\author{Randall McCutcheon}
\email{rmcctchn@memphis.edu}
\author{Alistair Windsor}
\email{awindsor@memphis.edu}
\address{Department of Mathematical Sciences\\ Memphis\\ TN 38152\\ U.S.A.}

\maketitle

\begin{abstract}
	Let $T$ be any invertible, ergodic, aperiodic measure-preserving transformation of a Lebesgue probability space $(X, \calB, \mu)$, and \P\, any finite measurable partition of $X$.  We show that a (finite) Alpern multitower may always be constructed whose base is independent of \P.  \bigskip
	
	\textit{2010 Mathematics Subject Classification}: 28D05, 37M25, 60A10 \medskip
	
	\textit{Keywords:} Rohlin tower, Alpern tower, independent sets, measure-preserving transformation, probability space. 
\end{abstract}

There are two well-known theorems in measurable ergodic theory which provide stronger versions of the classical Kakutani-Rokhlin tower construction: 

\begin{enumerate}
	\item[(1)] In any suitable system, given a finite measurable partition of the underlying space, there exists a Kakutani-Rokhlin tower whose levels are independent of the partition. 
	\item[(2)] In any suitable system, there exists an Alpern tower (see the paragraph just below the statement of Theorem \ref{thm:Main}). 
\end{enumerate}

The result described in (1) was used most famously in the proof by Ornstein of his Isomorphism Theorem for Bernoulli shifts (\cite{Or:70}), and has proven useful to others over time (see \cite{Ko:04} for a broader discussion). The Alpern tower result has also been extensively applied (\cite{EP:97} has a brief overview, see \cite{Ka:12} for a more recent application), and has been shown to be an equivalent form of two other important results in ergodic theory (\cite{AP:08}). 

Our colleague Steven Kalikow expressed the thought that it would be reasonable to hope that someone could combine the two and get even stronger results. In this short note we accomplish the first of these goals by proving the following result: 

\begin{thm}\label{thm:Main}
  Given an invertible ergodic aperiodic measure-preserving transformation $T$ of a
  Lebesgue probability space $(X, \calB, \mu)$, a finite measurable partition
  $\P=\{P_1,P_2,\ldots ,P_r\}$ of $X$, relatively prime
  $n_1, \ldots ,n_t\in \N$, and positive real numbers
  $x_1, \ldots , x_t$ satisfying $\sum_{i=1}^t x_i n_i = 1$, there
  exist measurable sets $B_i$, $1\leq i\leq n$, such that
  \begin{enumerate} \item $\mu(B_i)=x_i$, \item
    $\{ T^j B_i :1\leq i\leq t, 0\leq j<n_i\}$ partitions $X$, and
  \item \label{part:ind} $B_i$ is independent of $\P$, $1\leq i\leq t$. $_\blacksquare$ \end{enumerate}

\end{thm}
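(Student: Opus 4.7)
The plan is to combine Ornstein's independent Rokhlin lemma with a cellular version of the Alpern multitower construction based on numerical-semigroup arithmetic, followed by an iterative limiting procedure to obtain the exact measures. The key observation is that if one applies the strengthened Rokhlin lemma with the refined partition $\mathcal{R}:=\bigvee_{s=0}^{H-1}T^{-s}\P$, then the resulting height-$H$ Rokhlin tower base $F$ is independent of $\mathcal{R}$---strictly stronger than independence from $\P$. This implies every translate $T^{s}F$ (for $0\le s<H$) is independent of $\P$, and any cut $F=\bigsqcup_{\ell}F^{(\ell)}$ proportional on each $\mathcal{R}$-atom (meaning $\mu(F^{(\ell)}\cap R)=\lambda_{\ell}\mu(F\cap R)$ for every $R\in\mathcal{R}$) produces pieces $F^{(\ell)}$ each retaining the property that every translate $T^{s}F^{(\ell)}$ is independent of $\P$.

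With this in hand, build the multitower as follows. Since $\gcd(n_{1},\ldots,n_{t})=1$, the Sylvester--Frobenius (Chicken McNugget) theorem ensures that for all sufficiently large $H$ the point $(Hx_{1},\ldots,Hx_{t})$ lies strictly inside the convex hull of the finite set of integer lattice points $\{(N_{1},\ldots,N_{t})\in\Z_{\ge 0}^{t}:\sum_{i}n_{i}N_{i}=H\}$. Fix a representation $(Hx_{i})_{i}=\sum_{\ell}\lambda_{\ell}v^{(\ell)}$ as a convex combination of such lattice points. Given $\epsilon>0$, take $H$ large and apply Ornstein--Rokhlin to obtain $F$ with $H\mu(F)>1-\epsilon$ independent of $\mathcal{R}$; cut it proportionally into $F^{(\ell)}$ of measure $\lambda_{\ell}\mu(F)$. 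In each $F^{(\ell)}$-column organize the $H$ levels into $v_{1}^{(\ell)}$ consecutive stacked subtowers of height $n_{1}$, then $v_{2}^{(\ell)}$ of height $n_{2}$, and so on; setting $c_{i}^{(\ell)}=\sum_{j<i}v_{j}^{(\ell)}n_{j}$, define $B_{i}^{(\epsilon)}=\bigcup_{\ell}\bigcup_{k=0}^{v_{i}^{(\ell)}-1}T^{c_{i}^{(\ell)}+kn_{i}}F^{(\ell)}$. A direct count yields $\mu(B_{i}^{(\epsilon)})=(1-O(\epsilon))x_{i}$, pairwise disjointness of $\{T^{j}B_{i}^{(\epsilon)}\}$, and independence of each $B_{i}^{(\epsilon)}$ from $\P$ (since each disjoint summand $T^{s}F^{(\ell)}$ is).

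The main obstacle---and the technical heart of the proof---is the passage from the approximate measures $(1-O(\epsilon))x_{i}$ to the exact values $x_{i}$: aperiodicity of $T$ prevents $H\mu(F)=1$, so a single Rokhlin tower always leaves a small residual uncovered. I would address this by iteration, choosing $\epsilon_{k}\downarrow 0$ rapidly with compatible heights $H_{k}\mid H_{k+1}$ and building each successive Ornstein--Rokhlin base $F_{k+1}$ as a Rokhlin base for the induced map $T_{F_{k}}$, so that the stage-$(k{+}1)$ approximate multitower refines the stage-$k$ one and the underlying convex decompositions are chosen compatibly. With enough care the successive symmetric differences $\mu(B_{i}^{(k+1)}\triangle B_{i}^{(k)})$ are summable; the $B_{i}^{(k)}$ then converge in the $L^{1}$ symmetric-difference metric to sets $B_{i}$ with $\mu(B_{i})=x_{i}$, independence from $\P$ is preserved as a condition closed in this topology, and the disjoint-tower structure passes to the limit, yielding the desired Alpern multitower whose bases are independent of $\P$.
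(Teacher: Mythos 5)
Your proposal takes a genuinely different route from the paper, and the route has a real gap at its ``technical heart.''

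The paper does not start from Ornstein's independent Rokhlin lemma; it starts from Theorem~\ref{thm:SpecialCase} (the independent \emph{Alpern} tower of \cite{CCKKM:15}), which already produces an exact two-column decomposition of $X$ with no error set: a column of height $N$ over $B''=B\setminus T^{-N}E$ and one of height $N+1$ over $B'=T^{-N}E$, with $\mu(E)$ controllably small and every rung independent of $\P$. The rest of the paper's proof is a single finite combinatorial step: write $N+1=\sum_i n_i(z_i+L)$ with each $z_i+L>0$ and arrange $L$ so that $n_i\mid N$; slice the $(N+1)$-column into $z_i+L$ subtowers of height $n_i$ to get bases $B_i'$ of small measure $(z_i+L)\mu(E)<x_i$; then, in the $N$-column, cut $B''$ into pieces $C_i$ proportionally on each word-atom $C^{(w)}$ with proportions $r_i$ chosen so that the resulting $B_i''$ has measure exactly $x_i-\mu(B_i')$, and slice each $C_i$-column into $N/n_i$ subtowers of height $n_i$. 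Independence is preserved because every rung and every cut is a proportional subdivision of $\P$-atoms, and the measures come out exactly right because the two columns together cover $X$ with no residual. No limiting argument is needed.

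By contrast, you start from a single Rokhlin tower, which by aperiodicity must leave an error set of positive measure, so at every finite stage your candidate bases have $\mu(B_i^{(\epsilon)})=(1-\mu(E))x_i<x_i$ and the translates $\{T^jB_i^{(\epsilon)}\}$ miss $E$ entirely; conditions (1) and (2) both fail strictly. The proposed repair---iterating with $H_k\mid H_{k+1}$, inducing on $F_k$, and passing to an $L^1$ limit---is precisely where the argument stops being a proof. You assert that ``with enough care'' the symmetric differences $\mu(B_i^{(k+1)}\triangle B_i^{(k)})$ are summable, but nothing in the construction forces the stage-$(k+1)$ convex decomposition and column-slicing to agree with the stage-$k$ one outside a small set: building $F_{k+1}$ as a Rokhlin base for the induced map $T_{F_k}$ re-cuts $F_{k+1}$ into pieces $F_{k+1}^{(\ell)}$ that need not align with the earlier $F_k^{(\ell)}$, and the Rokhlin error at stage $k+1$ is scattered through the stage-$k$ tower rather than confined to a region you control. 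Making the refinement genuinely compatible---so that the changes are supported on a set of measure summable in $k$, while simultaneously preserving the exact proportionality on $\P$-atoms needed for independence---is exactly the content one would need to prove, and doing so essentially amounts to re-deriving Theorem~\ref{thm:SpecialCase} (indeed its small-error strengthening, Corollary~\ref{cor}) from Ornstein's lemma. The convex-hull/Frobenius bookkeeping in your second paragraph is a perfectly reasonable alternative to the paper's explicit B\'ezout representation $N+1=\sum n_i(z_i+L)$, and the observation that cutting proportionally on $\bigvee T^{-s}\P$-atoms preserves independence of every rung is the same idea the paper uses with the $C^{(w)}$. But the exactness problem is not a loose end to be cleaned up by a limiting scheme; it is the crux, and the paper's use of Theorem~\ref{thm:SpecialCase} is precisely what dissolves it.
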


The novelty here is (\ref{part:ind}),  independence of the base from the given partition \P. The main result by Alpern in \cite{A:79} (Corollary 2) is that under the hypothesis of Theorem \ref{thm:Main} (sans \P, which plays no role in Alpern's theorem), conclusions (1) and (2) hold. In fact he does not require $T$ to be ergodic, only aperiodic. It is fairly clear (using the ergodic decomposition) that our result will also hold for aperiodic invertible m.p.t.'s, but the technical steps to show this (regarding questions of measurability) would dilute the efficiency of this note, so we do not include them. 

 The collection $\{ T^j B_i :1\leq i\leq t, 0\leq j<n_i\}$ is what we earlier referred to as an {\em Alpern tower}.  Alpern also proved (\cite{A:81}) a denumerable version of his tower theorem, namely, that the collection $\{B_i\}$ in Theorem \ref{thm:Main} may be denumerable,  as long as the heights $\{n_i\}$ form a relatively prime set. This result is known as {\bf MRT}, the Multiple Rokhlin Tower decomposition. Sahin (\cite{S:09}) proved a generalization of MRT for $\mathbb Z^d$ actions.

At this point we do not know if Theorem \ref{thm:Main} can be extended to either of these more general settings. \smallskip

Theorem \ref{thm:Main} will be derived from the following special case, proved in \cite{CCKKM:15}:

\begin{thm}\label{thm:SpecialCase} Let $(X, \calB, \mu)$ be a Lebesgue
  probability space and
  $\P$ a finite measurable partition of $X$. For any ergodic aperiodic
  invertible measure-preserving transformation $T$ of $X$ and
  $N \in \N$, there exists a Rokhlin tower of height $N$ with base $B$
  and error set $E$ (i.e. $\{ B, TB, T^2B, \ldots ,T^{N-1}B, E\}$
  partitions $X$) with $T(E) \subset B$ and $B$ independent of the
  partition $\P$. $_\blacksquare$ \end{thm}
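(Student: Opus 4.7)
The plan is to derive the theorem as a refinement of the classical independent Kakutani--Rokhlin tower (item (1) of the introduction), in its stronger form guaranteeing that \emph{every} level (not only the base) of the tower is independent of $\P$---a well-known strengthening going back to Ornstein. I first invoke this strengthened lemma with tower height $MN$ for $M$ to be chosen sufficiently large, yielding a set $C$ with $\{T^j C\}_{j=0}^{MN-1}$ pairwise disjoint, every level independent of $\P$, and error $E_0 := X \setminus \bigcup_{j=0}^{MN-1}T^j C$ of measure less than a prescribed $\epsilon$.

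The next step is to coarsen this tall tower to height $N$ by setting $B_1 := \bigcup_{i=0}^{M-1}T^{iN}C$, so that $\{T^j B_1\}_{j=0}^{N-1}$ is pairwise disjoint with union $X \setminus E_0$. Because each summand $T^{iN}C$ is independent of $\P$ and all have equal measure, one obtains
\[
\mu(B_1 \cap P_l) \;=\; \sum_{i=0}^{M-1}\mu(T^{iN}C \cap P_l) \;=\; M\mu(C)\mu(P_l) \;=\; \mu(B_1)\mu(P_l),
\]
so $B_1$ is independent of $\P$. A direct invertibility computation yields the identity $T(E_0) = (E_0 \cup C) \setminus T^{MN}C$, showing that $T(E_0) \subset B_1 \cup E_0$; the only obstruction to already having $T(E_0) \subset B_1$ is the recurrent portion $T(E_0) \cap E_0 = E_0 \setminus T^{MN}C$.

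The heart of the proof is to absorb this recurrent error while preserving independence. By aperiodicity of $T$, the function $\tau(x) := \min\{k \geq 1 : T^k x \notin E_0\}$ is finite a.e.\ on $E_0$, and the identity above forces $T^{\tau(x)}x \in C \subset B_1$. My plan is an iterative cut-and-paste: at each pass identify the points $y \in T(E_0) \cap E_0$ and their $T$-preimage chains inside $B_1$, remove a compensating set $G \subset B_1$ whose forward iterates $T^j G$ are reassigned to an enlarged error $E := E_0 \cup \bigsqcup_{j=0}^{N-1}T^j G$, and set $B := B_1 \setminus G$. Aperiodicity ensures the procedure terminates in finitely many passes (measure-theoretically), yielding a base $B$ and error $E$ with $\{B, TB, \ldots, T^{N-1}B, E\}$ partitioning $X$ and $T(E) \subset B$.

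The main difficulty is maintaining independence of $B$ from $\P$ through these modifications. Because each level of the initial tower is independent of $\P$, every $\P$-class intersects $B_1$ in its proportional share $\mu(B_1)\mu(P_l)$, leaving room to choose $G$ proportionally within each $\P$-class. Taking $M$ large enough that $\mu(E_0) \ll \min_l \mu(P_l)\cdot \mu(C)$ makes the total mass removed from each class negligible relative to the available slack, so the required proportional cuts can be executed exactly, producing a base $B$ that is both the base of a height-$N$ castle with $T(E) \subset B$ and independent of $\P$.
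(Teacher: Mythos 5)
Your first two steps are sound: getting a height-$MN$ tower all of whose levels are independent of $\P$ (by applying the classical independent Rokhlin lemma to $\bigvee_{j=0}^{MN-1}T^{-j}\P$) and coarsening it to $B_1=\bigcup_{i=0}^{M-1}T^{iN}C$ is exactly the kind of bookkeeping this paper also uses in its Corollary, and your identity $T(E_0)=(E_0\cup C)\setminus T^{MN}C$ is correct. The proof breaks down at the absorption step, which is where the whole content of the theorem lies. If $\{B,TB,\ldots,T^{N-1}B,E\}$ partitions $X$ and $T(E)\subset B$, then $E$ is disjoint from $B$ and hence $E\cap T(E)=\emptyset$: the error of an Alpern tower must be a single level, namely the top level of the height-$(N+1)$ columns, so that $T^{-j}E\subset T^{N-j}B$ for $1\le j\le N$. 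Your enlarged error $E=E_0\cup\bigcup_{j=0}^{N-1}T^jG$ contains an entire column of height $N$ over $G$, and $T(T^jG)=T^{j+1}G\subset E$ for $0\le j\le N-2$; so for $N\ge 2$ no reassignment of full columns to the error can ever satisfy $T(E)\subset B$, no matter how small the measures involved. In other words, the difficulty is not that the recurrent part of $E_0$ is too large, but that your procedure never organizes $X$ into columns of heights exactly $N$ and $N+1$, which is what $T(E)\subset B$ amounts to.

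Two further claims are unsupported even setting that aside. The assertion that aperiodicity makes the cut-and-paste terminate ``in finitely many passes'' is not justified: points of $E_0$ can return to the error region with arbitrarily long and varied patterns on sets of positive measure, and each pass creates new error of the same forbidden column shape. And the independence maintenance is not free either: the set $G$ you must remove is dictated by the dynamics (the preimage chains of the recurrent error), so it cannot simultaneously be chosen proportionally across the $\P$-classes; padding it out can only add more columns, which again violate $T(E)\subset B$. The standard route to the Alpern condition is different in kind: one passes to the Kakutani skyscraper over a suitable base and writes return times as nonnegative combinations $aN+b(N+1)$, cutting each return column into blocks of heights $N$ and $N+1$; the delicate point, and the actual content of Theorem~\ref{thm:SpecialCase}, is doing this while keeping the resulting base exactly independent of $\P$. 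Note also that the present paper does not prove this theorem at all --- it quotes it from \cite{CCKKM:15} --- so the argument you would need is precisely the one supplied there, and your proposal does not yet contain a substitute for it.
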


The special case of a Rokhlin tower of height $N$ with base $B$
and error set $E$  with $T(E) \subset B$ is often referred to as an {\em Alpern tower of height $N$}. There is another way to visualize such a tower which will be helpful in understanding the proof of Theorem \ref{thm:Main}, as follows. 

\begin{defn} \label{def:1}
	By a {\em tower over B} we will mean a set $B \subset X$, called the {\em base}, and a countable partition $B = B_1 \cup B_2 \cup \cdots$, together with their images  $T^iB_j$, $0 \le i < j$, such that the family $\{T^iB_j : 0 \le i < j\}$ consists of pairwise disjoint sets. If this family partitions $X$, we will say that the tower is {\em exhaustive}. 
\end{defn}

Some of the $B_i$ could be empty, in which case we discard them. If a tower over $B$ is exhaustive and $B = B_N \cup B_{N+1}$, then 
$\{B, TB, \ldots, T^{N-1}B, E=T^N B_{N+1}\}$ partitions $X$, and it must be the case (by invertiblity) that $T(E) \subset B$. Hence, one may visualize an Alpern tower of height $N$ as two columns, one sitting over $B_N$ of height $N$, one sitting over $B_{N+1}$ of height $N+1$, with $E = T^{N}(B_{N+1})$. Since $T$ is measure-preserving and invertible, and $T(E) \subset B$, we must have $T^{-1}E = T^{N-1}B_{N+1}$, $T^{-2}E = T^{N-2}B_{N+1}$, etc.. This structure is used explicitly in the proof of Theorem \ref{thm:Main}, in particular at Observation 1.

It was remarked in \cite{CCKKM:15} that the proof of Theorem \ref{thm:SpecialCase}
shows that the error set $E$ can be taken as small as desired. We give
a short proof here as part of the following Corollary, which also shows that in fact much more than the base of the tower may be taken to be independent of the partition. 

\begin{cor}\label{cor} In the conclusion of Theorem \ref{thm:SpecialCase} one may require
  that $\mu(E)$ is as small as desired and that all of the sets 
  \begin{enumerate}
  \item $T^j E$,  for  $-N\leq j\leq 0$, and 
  \item $T^j B$, for $0\leq j<N$
  \end{enumerate}
  are independent of the partition $\P$. 
\end{cor}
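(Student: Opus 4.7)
The plan is to apply Theorem \ref{thm:SpecialCase} just once, but to a suitably enlarged partition and with a target height $M$ much larger than $N$, and then to ``fold'' the resulting tall Alpern tower down into one of height $N$. A single such application should simultaneously force $\mu(E)$ small and give independence of every translate appearing in items (1) and (2) of the corollary.

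Concretely, given a target $\e>0$, I would pick $K\in\N$ with $1/(KN)<\e$, put $M=KN$, and work with the refined partition
\[
   \P^{*} := \bigvee_{l=-(M-1)}^{N} T^{l}\P,
\]
which is still a finite measurable partition of $X$. Applying Theorem~\ref{thm:SpecialCase} at height $M$ to $\P^{*}$ produces an Alpern tower $\{\tilde B, T\tilde B,\ldots, T^{M-1}\tilde B,\tilde E\}$ with $\tilde B$ independent of $\P^{*}$. I would then set $B := \bigcup_{k=0}^{K-1}T^{kN}\tilde B$ and $E := \tilde E$. A routine check shows that $\{B,TB,\ldots,T^{N-1}B,E\}$ partitions $X$, that $T(E)=T(\tilde E)\subset \tilde B\subset B$, and that $\mu(E)=\mu(\tilde E)<1/M<\e$.

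The heart of the argument will be the disjoint-union identity $T^{k}B=\bigsqcup_{m=0}^{K-1}T^{mN+k}\tilde B$, valid for every $k\in\Z$. Each summand is independent of $\P$ precisely when $-(mN+k)$ lies in $[-(M-1),N]$, and the range of $\P^{*}$ was designed so that this is automatic for all $k\in[-N,N-1]$ and $m\in[0,K-1]$. A disjoint union of sets independent of $\P$ is itself independent of $\P$, so $T^{k}B$ will be independent of $\P$ for every such $k$, which already delivers item (2). For item (1) I would use the observation $T^{j}E=X\setminus\bigsqcup_{k=j}^{j+N-1}T^{k}B$; when $-N\le j\le 0$ the indices $k$ lie in $[-N,N-1]$, so the union---and hence its complement---is independent of $\P$.

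I do not foresee a deep obstacle. The only point requiring care is the index bookkeeping when choosing the window in $\P^{*}$: it must be wide enough to absorb every shift produced by the folding step. The window $[-(M-1),N]$ above is designed with exactly this in mind, and everything else falls out of the basic fact that a disjoint union of sets independent of $\P$ remains independent of $\P$.
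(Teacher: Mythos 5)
Your proposal is correct and follows essentially the same route as the paper: apply Theorem~\ref{thm:SpecialCase} to a suitably refined partition $\bigvee T^{l}\P$ at an enlarged height $M=KN$, then fold the tall Alpern tower down to height $N$ via $B=\bigcup_{k<K}T^{kN}\tilde B$, using that complements and disjoint unions preserve independence. The only organizational difference is that the paper separates the argument into two stages (first proving independence of all the required translates for an arbitrary height by taking $\Q=\bigvee_{i=-N+1}^{N}T^{i}\P$, then making the error small by applying that result at height $kN$ and folding), whereas you do both in one application by choosing the window $[-(M-1),N]$ large enough from the start; your bookkeeping makes explicit what the paper's second step leaves to the reader.
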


\begin{proof}[Proof of Corollary \ref{cor}]
  We first prove that the two families of sets may be taken to be
  independent of the partition $\P$. Let
  $\Q=\bigvee_{i=-N+1}^N T^i \P$ and let, by Theorem
  \ref{thm:SpecialCase}, $B$ be the base of a Rokhlin tower of height
  $N$ with an error set $E$ satisfying $TE\subset B$ and  $B$ is
  independent of $\Q$. For $0\leq j<N$ we have $T^jB$ is independent
  of $T^j \Q$ and, hence, of the coarser partition
  $\bigvee_{i=0}^N T^i \P$. Since
  $\{ B, TB, T^2B, \ldots ,T^{N-1}B, E\}$ partitions $X$ this implies
  that $E$ is independent of $\bigvee_{i=0}^N T^i \P$. In particular,
  for $-N\leq j\leq 0$ we have that $T^{j} E$ is independent of the
  partition $\P$ .

  Now we prove that for any $\epsilon>0$ we may take $\mu(E) <
  \epsilon$ in the conclusion of Theorem \ref{thm:SpecialCase}. Chose
  $k \in \N$ such that $\frac{1}{k N} < \epsilon$. Now we construct a tower of height $k N$ with base
  $B'$ such that, for $0\leq j<kN$, $T^j B'$  is independent of $\P$. We are given $T(E) \subset B'$ so in particular $\mu(E) <
  \frac{1}{k N}$. Now define 
  \begin{equation*}
    B = \cup_{i=0}^{k-1} T^{i N} B'.  
  \end{equation*}
  By construction $B$ is again independent of $\P$.

 \end{proof}

We can now use Corollary \ref{cor} to prove Theorem \ref{thm:Main}.

 \begin{proof}[Proof of Theorem \ref{thm:Main}]
   Since $n_1, \dots, n_t$ are relatively prime we may choose
   $z_1,\ldots ,z_t \in \Z$ such that
    $\sum_i z_in_i = 1$. Choose $L >  \max\{|z_i|: 1\leq i\leq t\}$
    such that $L$ is
    divisible by each $n_i$ and let $N= L(n_1+n_2+\cdots +n_t)$.
    Then
    \begin{equation*}
      N+1 = \sum_{i=1}^t n_i\big(z_i+L\big) 
    \end{equation*}
    where each $z_i+L \in \N$ by our choice of $L$.

    Choose by Corollary \ref{cor} a measurable base $B$ and error set $E$, with
    \begin{equation} \label{eq:1}
      \mu(E)< \frac{\min \{x_i:1\leq i\leq t\}}{N+1}
    \end{equation}
    such that $\{ B, TB, T^2B,$ $\ldots ,T^{N-1}B, E\}$ partitions
    $X$, $TE\subset B$ and such that both
    \begin{enumerate}
    \item $T^j E$ for  $-N\leq j\leq 0$, and 
    \item $T^j B$, for $0\leq j<N$,
    \end{enumerate}
    are independent of the partition $\P$. \medskip
    
    \noindent {\bf Observation 1:}  As a consequence of $TE \subset B$ we have $T^{-1} E \subset
    T^{N-1}B$ and as a result $T^{-N} E \subset B$. We may thus
    partition $B = B' \cup B''$ where $B' = T^{-N} E$ and $B'' = B
    \setminus B'$. There is a tower of height $N+1$ over $B'$ 
    \begin{equation*}
      B' = T^{-N}E, \quad T B' =T^{-N+1}E, \quad\dots\quad, T^{N-1} B'
      =T^{-1} E, \quad
      T^N B' = E
    \end{equation*}
    and from Corollary \ref{cor} each
    rung of the tower is independent of $P$. Similarly, there is a
    tower of height $N$ over $B''$
    \begin{equation*}
      B'' = B\setminus B',\quad TB'' = T B\setminus T B',\quad \dots
      \quad, T^{N-1} B'' =  T^{N-1} B\setminus T^{N-1} B'
    \end{equation*}
    and, since from Corollary \ref{cor} both $ T^j B $ and $T^j B'$
    are independent of $\P$, we have that each rung of the tower is independent of
    $\P$. 

    We partition (working our way up from the bottom) the tower of
    height $N+1$ over $B'$ so that for each $1\leq i \leq t$ there
    are $(z_i+L)$ towers  of height $n_i$ each consisting of rungs of
      the original tower. Let $B_i'$ be the union of the bases of the
      $z_i + L$ towers of height $n_i$.

      The idea here is that each $B_i'$ will be a portion of the $B_i$
      we seek; we remark that because each rung of the tower of height
      $N+1$ over $B'$ is independent of $\P$, and each $B_i'$ is a
      union of some of these, each set $B_i'$ is independent of $\P$.
      Moreover, $\{ T^j B_i' :1\leq i\leq t, 0\leq j<n_i\}$ partitions
      the tower of height $N+1$ over $B'$ and
    \begin{equation*}
      \mu(B_i')= (z_i+L)\mu(E)< x_i, \;\; 1\leq i\leq t, 
    \end{equation*}
by (\ref{eq:1}).

    We next turn to the tower of height $N$ over $B''$. For $ w
    \in \P^N$, a word of length $N$ whose letters are cells of $\P$,  define 
    \begin{equation*}
      C^{(w)} = \{ x \in B'' : T^{j-1} x \in w_j \text{ for $ 1 \leq
        j \le N$} \}. 
    \end{equation*}
 For each $w \in \P^N$  further partition
    $C^{(w)}$ into pieces $C^{(w)}_i$, $1\leq i\leq t$, such that
    $\mu(C^{(w)}_i) = r_i \mu(C^{(w)})$, where
    \begin{equation*}
      r_i = { n_i\big(x_i-\mu(B_i')\big)  \over N\mu(B'')}.
    \end{equation*}
    One notes that the $r_i$ sum to 1. \medskip

    Let $C_i = \bigcup_{w \in \P^N} C^{(w)}_i$. We have $C_1, \dots,
    C_t$
    partitions $B''$. Since every rung of the tower of height $N$ over
    $B''$ is independent of $\P$ and $C_i$ contains an equal
    proportion of every $C^{(w)}$, every rung of the $N$ tower over
    $C_i$ is independent of $\P$ as well.

    Next partition, again
    working from the bottom up, the tower over $C_i$ of height $N$
    into $N/n_i$
    towers of height $n_i$, $1\leq i\leq t$. Let $B_i''$ be the
    union of the bases of these towers of height $n_i$. Then
    $\{ T^j B_i'' :1\leq i\leq t, 0\leq j<n_i\}$ partitions the
    tower over $B''$ of height $N$, so if we let $B_i=B_i'\cup B_i''$ then, since as
    noted earlier $\{ T^j B_i' :1\leq i\leq t, 0\leq j<n_i\}$
    partitions the $(N+1)$ tower over $B'$, (ii) is satisfied.
    Moreover, each $B_i''$ is a union of rungs independent of $\P$,
    and hence is itself independent of $\P$. Therefore, since as noted
    earlier $B_j'$ is independent of $\P$, (iii) is satisfied. Finally
    \begin{equation*}
      \mu(B_i'') = {N \mu(C_i)\over n_i} ={Nr_i\mu(B_i'')\over n_i} =
      x_i-\mu(B_i')
\end{equation*}
so that 
\begin{equation*}
  \mu(B_i)=\mu(B_i')+\mu(B_i'') = x_i
\end{equation*}
as required.

\end{proof}

\newcommand{\etalchar}[1]{$^{#1}$}
\providecommand{\bysame}{\leavevmode\hbox to3em{\hrulefill}\thinspace}
\providecommand{\MR}{\relax\ifhmode\unskip\space\fi MR }
\providecommand{\MRhref}[2]{%
  \href{http://www.ams.org/mathscinet-getitem?mr=#1}{#2}
}
\providecommand{\href}[2]{#2}


\begin{thebibliography}{CCK{\etalchar{+}}15}

\bibitem[Alp79]{A:79}
Steve Alpern, \emph{Generic properties of measure preserving homeomorphisms},
  Ergodic Theory, Springer Lecture Notes in Mathematics \textbf{729} (1979),
  16--27.

\bibitem[Alp81]{A:81}
Steven Alpern, \emph{Return times and conjugates of an antiperiodic
  transformation}, Ergodic Theory and Dynamical Systems \textbf{1} (1981),
  135--143.

\bibitem[AP08]{AP:08}
S.~Alpern and V.~S. Prasad, \emph{Multi{T}owers, {C}onjugacies, and {C}odes:
  {T}hree theorems in ergodic theory, one variation on {R}okhlin's {L}emma},
  Proceedings of the American Mathematical Socitey \textbf{136} (2008),
  4373--4383, DOI: https://doi.org/10.1090/S0002-9939-08-09520-8.

\bibitem[CCK{\etalchar{+}}15]{CCKKM:15}
J.~T. Campbell, J.~T. Collins, R.~King, S.~Kalikow, and R.~McCutcheon, \emph{An
  Alpern tower independent of a given partition}, Colloq. Math. \textbf{141}
  (2015), 119--124.

\bibitem[EP97]{EP:97}
S.~J. Eigen and V.~S. Prasad, \emph{Multiple {R}okhlin tower theorem: a simple
  proof}, New York Journal of Mathematics \textbf{Proceedings of the New York
  Journal of Mathematics Conference, vol. 3A} (1997), 11--14,
  http://nyjm.albany.edu:800/j/1997/3A.11.html.

\bibitem[Kal12]{Ka:12}
Steven Kalikow, \emph{Infinite partitions and {R}okhlin towers}, Ergodic Theory
  and Dynamical Systems \textbf{32} (2012), no.~2 (Daniel J. Rudolph - in
  Memoriam), 337--352, https://doi.org/10.1017/S0143385711000381.

\bibitem[Kor04]{Ko:04}
Isaac Kornfeld, \emph{Some old and new {R}okhlin towers}, Chapel Hill Ergodic
  Theory Workshops, Contemporary Mathematics \textbf{356} (2004), 145--169.

\bibitem[Orn70]{Or:70}
Donald Ornstein, \emph{Bernoulli shifts of the same entropy are isomorphic},
  Advances in Mathematics \textbf{4} (1970), 337--352.

\bibitem[Sah09]{S:09}
Ayse Sahin, \emph{The ${Z}^d$ {A}lpern multi-tower theorem for rectangles: a
  tiling approach}, Dynamical Systems: An International Journal (2009), no.~4,
  485--499, https://doi.org/10.1080/14689360903127188.

\end{thebibliography}
\end{document}